\numberwithin{equation}{section}
\definecolor{brown}{cmyk}{0, 0.72, 1, 0.45}
\definecolor{grey}{gray}{0.5}
\renewcommand{\epsilon}{\varepsilon}
\newcounter{rot}
\newtheorem*{conjecture*}{Conjecture}
\newtheorem{theorem}{Theorem}[section]
\newtheorem*{theorem*}{Theorem}
\newtheorem{claim}[theorem]{Claim}
\newtheorem{corollary}[theorem]{Corollary}
\newtheorem{definition}[theorem]{Definition}
\newcommand{\scr}{\mathcal}
\newcommand{\abs}[1]{\left| #1 \right|}
\newcommand{\sqbs}[1]{\left[ #1 \right]}
\newcommand{\braces}[1]{\left\{ #1 \right\}}
\newcommand{\ignore}[1]{}
\newcommand{\beq}[1]{\begin{equation}\label{#1}}
\newcommand{\eeq}{\end{equation}}
\newcommand{\req}[1]{(\ref{#1})}
\newcommand{\vect}[1]{\vec{\boldsymbol{#1}}}
\def\cH{{\cal H}}
\def\sE{\scr{E}}
\def\sH{\scr{H}}
\title{On the Maximum Number of Edges in a Hypergraph with a Unique Perfect Matching }
\date{}
\author{
{\Large{Deepak Bal\thanks{Department of Mathematical Sciences, Carnegie Mellon University, Pittsburgh, PA 15213, USA} \quad
Andrzej Dudek\thanks{Department of Mathematics, Western Michigan University, Kalamazoo, MI 49008, USA} \quad
Zelealem B. Yilma\footnotemark[1]}}
}
\begin{document}
\maketitle

\begin{abstract}
In this note, we determine the maximum number of edges of a $k$-uniform hypergraph, $k\ge 3$, with a unique perfect matching. This settles a conjecture proposed by Snevily.
\end{abstract}

\section{Introduction}
Let $\sH = (V,\sE)$, $\sE\subseteq \binom{V}{k}$, be a $k$-uniform hypergraph (or $k$-graph) on $km$ vertices for $m\in\mathbb{N}$.
A perfect matching in $\sH$ is a collection of edges 
$\braces{M_1,M_2,\ldots,M_m} \subseteq \sE$ 
such that $M_i \cap M_j = \emptyset$ for all $i\neq j$  and $\bigcup_i M_i = V$. 
In this note we are interested in the maximum number of edges of a hypergraph $\cH$ with a unique perfect matching. Hetyei observed (see, \textit{e.g.}, \cite{HV,L,LP}) that for ordinary graphs (\textit{i.e.} $k=2$), this number cannot exceed $m^2$. To see this, note that at most two edges may join any pair of edges from the matching. Thus the number of edges is bounded from above by $m+2\binom{m}{2} = m^2$.
Hetyei also provides a unique graph satisfying the above conditions. His construction can be easily generalized to uniform hypergraphs (see Section \ref{constr} for details). Snevily~\cite{HS} anticipated that such generalization is optimal. 
Here we present our main result.

\begin{theorem}\label{mainthm}
For integers $k\geq 2$ and $m\geq 1$ let
$$
f(k,m) = m + b_{k,2}\binom{m}{2} + b_{k,3}\binom{m}{3} + \cdots + b_{k,k} \binom{m}{k},
$$
where 
$$
 b_{k,\ell} = \frac{\ell-1}{\ell}\sum_{i=0}^{\ell-1}(-1)^{i}\binom{\ell}{i}\binom{k(\ell-i)}{k}.
$$
Let $\sH=(V,\sE)$ be a $k$-graph of order $km$ with a unique perfect matching. Then 
\begin{equation}\label{eq:ineq}
|\sE| \le f(k,m).
\end{equation}
Moreover, \eqref{eq:ineq} is tight.
\end{theorem}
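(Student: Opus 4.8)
\medskip
\noindent\emph{Proof plan.}
My approach would be to classify the edges of $\sH$ by how they meet the unique perfect matching $\mathcal{M}=\{M_1,\dots,M_m\}$, thereby reducing \eqref{eq:ineq} to a local extremal question. For $e\in\sE$ put $\sigma(e)=\{i:e\cap M_i\neq\emptyset\}$, so $e\subseteq\bigcup_{i\in\sigma(e)}M_i$. First I would note that $|\sigma(e)|=1$ forces $e$ to be a matching edge (a $k$-set inside a $k$-set), accounting for the term $m$. Then I would fix $S\subseteq[m]$ with $|S|=\ell\ge 2$ and look at the edges with $\sigma(e)=S$: these are $k$-subsets of the $k\ell$-set $V_S:=\bigcup_{i\in S}M_i$ meeting each of the $\ell$ blocks $\{M_i\}_{i\in S}$ --- call such $k$-sets \emph{transversals}, and write $\mathcal{T}$ for the set of all of them. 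The crucial observation is that no $\ell$ edges with span $S$ are pairwise disjoint: $\ell$ disjoint transversals partition $V_S$, and replacing $\{M_i\}_{i\in S}$ by them inside $\mathcal{M}$ would give a second perfect matching. So the theorem reduces to a clean statement: a family of transversal $k$-subsets of $\ell$ disjoint $k$-blocks ($2\le\ell\le k$) that contains no $\ell$ pairwise disjoint members has at most $b_{k,\ell}$ members, and this is attained.

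Before proving that, I would pin down the arithmetic meaning of $b_{k,\ell}$. By inclusion--exclusion the number of transversals is $|\mathcal{T}|=\sum_{i\ge 0}(-1)^i\binom{\ell}{i}\binom{k(\ell-i)}{k}$, which is exactly the sum defining $b_{k,\ell}$; and since the automorphism group of the block structure is transitive on the $k\ell$ vertices, the number of transversals avoiding a given vertex does not depend on the vertex, hence equals the average $\frac{1}{k\ell}\,|\mathcal{T}|\,(k\ell-k)=\tfrac{\ell-1}{\ell}|\mathcal{T}|=b_{k,\ell}$. In particular every vertex lies in exactly $|\mathcal{T}|/\ell$ transversals. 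This identity also supplies the extremal family for the lower bound: all transversals avoiding one fixed vertex have no $\ell$ pairwise disjoint members (they cannot cover that vertex).

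The step I expect to be the real obstacle is the matching upper bound --- that a transversal family with no $\ell$ pairwise disjoint members has at most $b_{k,\ell}$ members. My plan is to prove a \emph{Baranyai-type decomposition}: the transversal $k$-graph on $V_S$, which by the previous paragraph is $(|\mathcal{T}|/\ell)$-regular, decomposes into $|\mathcal{T}|/\ell$ perfect matchings of $V_S$, each a set of $\ell$ pairwise disjoint transversals. For $\ell=k$ this is the familiar cyclic (Latin-square) $1$-factorization of the complete $\ell$-partite $\ell$-graph; in general I would derive it from Baranyai's matrix-rounding / splitting technique, which only ever subdivides existing classes and therefore keeps every set a transversal. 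Granting a decomposition $\mathcal{T}=P_1\sqcup\cdots\sqcup P_{|\mathcal{T}|/\ell}$, any transversal family $\mathcal{F}$ with no $\ell$ disjoint members must omit a set from each $P_j$, and since each transversal lies in exactly one $P_j$ this forces $|\mathcal{T}\setminus\mathcal{F}|\ge|\mathcal{T}|/\ell$, i.e.\ $|\mathcal{F}|\le b_{k,\ell}$. Summing over $S$ then gives $|\sE|\le m+\sum_{\ell=2}^{k}\binom{m}{\ell}b_{k,\ell}=f(k,m)$ (the terms with $\ell>k$ drop out, as no $k$-set meets more than $k$ blocks). I would note in passing that the Katona-style random-circular-order argument does \emph{not} suffice here: it only yields the weaker bound $\tfrac{\ell-1}{\ell}\binom{k\ell}{k}$, since along a random circular order most length-$k$ arcs need not be transversals.

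For tightness I would write down the extremal hypergraph directly (the $k$-uniform analogue of Hetyei's graph; see Section~\ref{constr}): fix a linear order $M_1,\dots,M_m$ and a distinguished vertex $v_i^{\ast}\in M_i$ for each $i$, and let $\sE$ consist of $M_1,\dots,M_m$ together with every $k$-set $e$ satisfying $|\sigma(e)|\ge 2$ and $v^{\ast}_{\min\sigma(e)}\notin e$. Then the edges with a fixed span $S$, $|S|=\ell$, form precisely an ``avoid a vertex'' transversal family of size $b_{k,\ell}$, so $|\sE|=f(k,m)$; and $\mathcal{M}$ is the unique perfect matching, since in any perfect matching $\mathcal{M}'\neq\mathcal{M}$, taking the least $i$ with $M_i\notin\mathcal{M}'$, the edge of $\mathcal{M}'$ through $v_i^{\ast}$ has span meeting $\{1,\dots,i-1\}$ and hence intersects some $M_j\in\mathcal{M}'$ with $j<i$ --- a contradiction. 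The only genuinely non-routine ingredient in all of this is the Baranyai-type $1$-factorization of the transversal hypergraph.
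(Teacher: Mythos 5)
Your overall strategy coincides with the paper's: classify edges by the set $S=\sigma(e)$ of matching edges they meet, observe that $\ell$ pairwise disjoint edges contained in $\bigcup_{i\in S}M_i$ and meeting every block would, together with $\scr{M}\setminus\{M_i\}_{i\in S}$, form a second perfect matching, and thereby reduce \eqref{eq:ineq} to the local statement that a family of transversal $k$-sets of $\ell$ disjoint $k$-blocks with no $\ell$ pairwise disjoint members has at most $b_{k,\ell}=\frac{\ell-1}{\ell}\abs{\scr{T}}$ members. Your computation of $\abs{\scr{T}}$ by inclusion--exclusion, the identification of $b_{k,\ell}$ as the number of transversals avoiding a fixed vertex, and your tightness construction (which is the paper's $\sH^*_m$ up to reversing the order of the blocks) are all correct and match Section~\ref{constr}.

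The one place you diverge is the crux, and that is where the gap sits. You reduce the local bound to an integral Baranyai-type $1$-factorization of the whole transversal hypergraph into $\abs{\scr{T}}/\ell$ parallel classes, but you only assert this; the remark that matrix rounding ``only ever subdivides existing classes and therefore keeps every set a transversal'' is not a proof, and adapting Baranyai's vertex-revealing induction to the multipartite transversal setting is genuinely delicate (it is easy for $\ell=2$, where the classes are complementary pairs, and for $\ell=k$ via the cyclic construction, but not in between). The paper shows you do not need the integral statement: a fractional version suffices and follows from symmetry alone. Split $\scr{T}$ into the orbits $\scr{G}_{\vect{a}}$ of $k$-sets of a fixed type $\vect{a}$ (the multiset of block-intersection sizes); the wreath product of symmetric groups acts transitively on each $\scr{G}_{\vect{a}}$ and preserves the set $\scr{C}_{\vect{a}}$ of coverings all of whose members have type $\vect{a}$ (which is nonempty by a cyclic-shift construction), so every $A\in\scr{G}_{\vect{a}}$ lies in exactly $\abs{\scr{C}_{\vect{a}}}\ell/\abs{\scr{G}_{\vect{a}}}$ such coverings. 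Since each covering must contain at least one absent $k$-set, double counting gives at least $\abs{\scr{G}_{\vect{a}}}/\ell$ absent sets of type $\vect{a}$, i.e.\ density at most $\frac{\ell-1}{\ell}$ within each type class, and a pigeonhole over types finishes the bound. Note that the restriction to a fixed type is essential for this averaging: the automorphism group is \emph{not} transitive on all of $\scr{T}$, so ``every transversal lies in equally many coverings'' is false without it. Your route would work if the factorization lemma were supplied, but as written the proof of the upper bound is incomplete precisely at the step you yourself flagged as the only non-routine one.
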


\noindent
In particular, if $\sH=(V,\sE)$ is a 3-uniform hypergraph of order $3m$ with a unique perfect matching, then 
$$
|\scr{E}| \le f(3,m) = m + 9\binom{m}{2} + 18\binom{m}{3} = \frac{5 m}{2} - \frac{9 m^2}{2} + 3 m^3. 
$$

\section{Construction}\label{constr}
In this section, we provide a recursive construction of a hypergraph $\sH^*_m$ of order $km$ with a unique perfect matching and containing exactly $f(k,m)$ edges.

Let $\sH^*_1$ be a $k$-graph on $k$ vertices with exactly one edge. Trivially, this graph has a unique perfect matching. Suppose we already constructed a $k$-graph $\sH^*_{m-1}$ on $k(m-1)$ vertices with a unique perfect matching. To construct the graph $\sH^*_m$ on $km$ vertices, add $k-1$ new vertices to $\sH^*_{m-1}$ and 
add all edges containing at least one of these new vertices. 
Then, add another new vertex and draw the edge containing the $k$ new vertices. 
Formally, let 
\begin{equation}\label{def:M}
M_i = \braces{k(i-1)+1,\ldots,ki} \text{ for } i=1,\ldots,m.
\end{equation}
Let $\sH^*_m=(V_m,\sE_m)$ , $m\ge 1$, be a $k$-graph on $km$ vertices with the vertex set
$$
V_m = \braces{1,\ldots,km} = \bigcup_{i=1}^{m}M_i
$$
and the edge set (defined recursively)
$$
\sE_m = \sE_{m-1} \cup \braces{E\in \binom{V_m}{k} : E\cap M_m \neq \emptyset,\, km\not\in E} \cup \braces{M_m},
$$
where $\sE_0=\emptyset$.

Note that $\sH^*_m$ has a unique perfect matching, 
namely, $\scr{M}_m = \braces{M_1,M_2,\ldots,M_m}$. 
To see this, observe that the vertex $km$ is only included in 
edge $M_m$. Hence, any matching must include $M_m$. 
Removing all vertices in $M_m$, we see that $M_{m-1}$ must be also included and so on. 
We call the elements of $\scr{M}_m$, {\em matching edges}. 
\begin{claim}
\label{constructionclaim} The $k$-graph $\sH^*_m = (V_m,\sE_m)$ satisfies 
$\abs{\sE_m} = f(k,m)$.
\end{claim}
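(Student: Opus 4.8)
\emph{Proof proposal.} The plan is to count $|\sE_m|$ directly by sorting the edges according to which matching edges they meet, and then to evaluate the resulting binomial sum by a short symmetry argument instead of a brute-force inclusion–exclusion.

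First I would unroll the recursion to
$$
\sE_m=\bigcup_{j=1}^{m}\left(\braces{E\in\binom{V_j}{k}:E\cap M_j\neq\emptyset,\ kj\notin E}\cup\braces{M_j}\right),
$$
and attach to each $k$-set $E\subseteq V_m$ its \emph{last index} $j(E):=\max\braces{i:E\cap M_i\neq\emptyset}$. The structural observation I would establish is: \emph{if $E$ is not a matching edge, then $E\in\sE_m$ if and only if $kj(E)\notin E$}; and every $M_i$ lies in $\sE_m$. Indeed, $E\subseteq V_{j(E)}$ and $E\cap M_{j(E)}\neq\emptyset$, so the only step at which $E$ could enter $\sE_m$ is step $j(E)$, where precisely the condition $kj(E)\notin E$ is imposed.

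Next I would partition $\sE_m$. The $m$ matching edges contribute $m$. Any other edge $E$ has $S(E):=\braces{i:E\cap M_i\neq\emptyset}$ of size $\ell$ with $2\le\ell\le k$ (it cannot meet only one $M_i$, else $E=M_i$; and $|E|=k$ bounds it above), and $E\subseteq\bigcup_{i\in S(E)}M_i$. Fix $S=\braces{i_1<\cdots<i_\ell}$. By the structural observation, the non-matching edges $E\in\sE_m$ with $S(E)=S$ are exactly the $k$-subsets of the $k\ell$-element set $M_{i_1}\cup\cdots\cup M_{i_\ell}$ that meet every block $M_{i_s}$ and avoid the single vertex $ki_\ell$; relabelling $M_{i_s}$ as the $s$-th block of $[k\ell]$ (so $ki_\ell\mapsto k\ell$), this number depends only on $k$ and $\ell$ — call it $b'_{k,\ell}$. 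Hence $|\sE_m|=m+\sum_{\ell=2}^{k}b'_{k,\ell}\binom{m}{\ell}$, and it remains to show $b'_{k,\ell}=b_{k,\ell}$.

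To identify $b'_{k,\ell}$, let $N_{k,\ell}$ be the number of $k$-subsets of $[k\ell]$, viewed as $\ell$ blocks of size $k$, that meet every block; inclusion–exclusion over the missed blocks gives $N_{k,\ell}=\sum_{i=0}^{\ell-1}(-1)^{i}\binom{\ell}{i}\binom{k(\ell-i)}{k}$ (the $i=\ell$ term vanishes). Now I would double-count: for a fixed block $B$, summing $|E\cap B|$ over all such $E$ gives $kN_{k,\ell}/\ell$ by symmetry among the $\ell$ blocks, hence each vertex of $B$ lies in $N_{k,\ell}/\ell$ of these subsets, so the number avoiding a prescribed vertex is $N_{k,\ell}-N_{k,\ell}/\ell=\tfrac{\ell-1}{\ell}N_{k,\ell}=b_{k,\ell}$. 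Thus $b'_{k,\ell}=b_{k,\ell}$ and $|\sE_m|=f(k,m)$. Nothing here is genuinely hard; the only points demanding care are pinning down the structural observation (verifying the recursion adds exactly the claimed edges and no others) and spotting the symmetry shortcut in the last step, which replaces an otherwise unwieldy double inclusion–exclusion by a one-line count.
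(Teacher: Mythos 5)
Your proposal is correct and follows essentially the same route as the paper: partition the edges by the set of matching edges they meet, count the $k$-sets meeting all $\ell$ blocks by inclusion--exclusion, and use the symmetry/double-counting argument to show that exactly a $\frac{\ell-1}{\ell}$ fraction of them avoid the distinguished vertex $ki_\ell$. The only difference is that you make explicit (via the ``last index'' unrolling) the structural fact that the paper dispatches with ``by construction,'' which is a welcome but not substantively different addition.
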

\begin{proof}
For $\ell=1,2,\ldots,k$, let $\scr{B}_\ell$ be the set of edges that intersect exactly $\ell$ matching edges, \textit{i.e.},
\[ 
\scr{B}_\ell = \braces{E\in\sE_m  : \sum_{i=1}^m{\bf{1}}_{E\cap M_i \neq \emptyset}=\ell}. 
\]
Note that $\sE_m = \bigcup_\ell \scr{B}_\ell$. Clearly, $\abs{\scr{B}_1} = \abs{\{M_1, \ldots, M_m\}}=m$, giving us the first term in $f(k,m)$. 
Now we show that $\abs{\scr{B}_\ell} = b_{k,\ell} \binom{m}{\ell}$
for $\ell = 2,\dots,k$. 
Let $\scr{L} = \braces{M_{i_1},M_{i_2}\ldots,M_{i_\ell}} \subseteq \scr{M}_m$ 
be any set of $\ell$ matching edges
with $1\leq i_1 < i_2 <\cdots< i_\ell \leq m$.
Let $\scr{G}$ be the collection of $k$-sets on the vertex set of $\scr{L}$ which intersect all of $M_{i_1},\ldots,M_{i_\ell}$.
The principle of inclusion and exclusion (conditioning on the number of $k$-sets that do not intersect a given subset of matching edges) yields that
\[
 \abs{\scr{G}} = \sum_{i=0}^{\ell-1}(-1)^{i}\binom{\ell}{i}\binom{k(\ell-i)}{k}.
\]

Now note that due to the symmetry of the roles of the vertices in $\scr{G}$, each vertex belongs to the same number of edges of $\scr{G}$, say $\eta$. Consequently, the number of pairs $(x,E)$, $x\in E \in \scr{G}$ equals $k\ell\eta$. On the other hand, since every edge of $\scr{G}$ consists of $k$ vertices we get that the number of pairs is equal to $\abs{\scr{G}}k$, implying that $\eta = \abs{\scr{G}}/\ell$. 

By construction, 
$E \in \scr{G}$ implies
$E \in \scr{B}_\ell$
unless vertex $ki_\ell$ is in $E$. As
\[\abs{\{ E \in \scr{G} : ki_\ell \in E\}} = \eta = \abs{\scr{G}}/\ell,\]
the number of edges of $\scr{B}_\ell$ on the vertex set of $\scr{L}$ equals
\begin{equation}
\label{blkG} 
\frac{\ell-1}{\ell}\abs{\scr{G}}
= b_{k,\ell}.
\end{equation}
As this argument applies to any choice of $\ell$ matching edges, 
we have $\abs{\scr{B}_\ell} = b_{k,\ell}\binom{m}{\ell}$,
thus proving the claim.
\end{proof}

\begin{corollary}
For all integers $k\geq 2$ and $m\geq 1$,
 \[f(k,m) = m + \sum_{i=1}^{m-1}\sqbs{\binom{k(i+1) - 1}{k} - \binom{ki}{k}}.\]
\end{corollary}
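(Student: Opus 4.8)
The plan is to read the identity directly off the recursive construction of $\sH^*_m$ in Section \ref{constr}, invoking Claim \ref{constructionclaim} to replace $\abs{\sE_m}$ by $f(k,m)$. Recall that $\sE_m = \sE_{m-1} \cup \braces{E\in\binom{V_m}{k} : E\cap M_m \neq \es,\ km\notin E} \cup \braces{M_m}$. These three families are pairwise disjoint: every edge of $\sE_{m-1}$ is contained in $V_{m-1} = V_m\setminus M_m$ and so misses $M_m$; the middle family meets $M_m$ but does not contain $km$; and $M_m$ contains $km$. Hence cardinalities add, and with Claim \ref{constructionclaim} we obtain
\[
f(k,m) \;=\; f(k,m-1) \;+\; \abs{\braces{E\in\tbinom{V_m}{k} : E\cap M_m \neq \es,\ km\notin E}} \;+\; 1 .
\]

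Next I would evaluate the middle term. The $k$-subsets of $V_m$ that avoid the vertex $km$ number $\binom{km-1}{k}$; among these, the ones that moreover avoid $M_m$ altogether are exactly the $k$-subsets of $V_{m-1}$, of which there are $\binom{k(m-1)}{k}$. Subtracting, the number of $k$-sets meeting $M_m$ but not containing $km$ equals $\binom{km-1}{k} - \binom{k(m-1)}{k}$. This yields the recurrence
\[
f(k,m) \;=\; f(k,m-1) \;+\; \sqbs{\binom{km-1}{k} - \binom{k(m-1)}{k}} \;+\; 1 , \qquad f(k,1) = 1 ,
\]
the base case being immediate since $\sH^*_1$ has a single edge.

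Finally, I would unfold the recurrence: summing the increments over $j = 2,\dots,m$ gives $f(k,m) = 1 + (m-1) + \sum_{j=2}^{m}\sqbs{\binom{kj-1}{k} - \binom{k(j-1)}{k}}$, and the substitution $i = j-1$ (so that $kj-1 = k(i+1)-1$ and $k(j-1) = ki$) rewrites the sum as $\sum_{i=1}^{m-1}\sqbs{\binom{k(i+1)-1}{k} - \binom{ki}{k}}$, which is precisely the claimed formula. There is no genuine obstacle here; the only points needing a little care are the disjointness of the three-fold union (so that the counts truly add) and the reindexing in the telescoping step. One could instead verify the identity purely algebraically from the closed form $f(k,m) = m + \sum_{\ell\ge 2} b_{k,\ell}\binom{m}{\ell}$, but routing through the construction is shorter and reuses work already done.
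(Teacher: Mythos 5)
Your proposal is correct and follows essentially the same route as the paper: the paper likewise derives the recurrence $a_m = a_{m-1} + \binom{km-1}{k} - \binom{k(m-1)}{k} + 1$ by counting the edges of $\sH^*_m$ added at each stage of the construction and then telescopes. Your extra verification that the three families in the union are pairwise disjoint is a welcome bit of added care, but the argument is the same.
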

\begin{proof}
We prove this by counting the edges of $\sH^*_m=(V_m,\sE_m)$ in a different way. Let $a_{m} = \abs{\sE_m}$, $m\ge 1$. Then it is easy to see that the following recurrence relation holds: 
$a_1 = 1$ and 
\begin{align}\label{eq:rec}
a_{m} &= a_{m-1} + \binom{km-1}{k} - \binom{k(m-1)}{k} + 1 \text{ for } m\geq2,
\end{align}
where the first binomial coefficient counts all the edges that do not contain vertex $km$; the second coefficient counts all the edges which do not intersect the matching edge $M_m$ (\textit{cf.}~\eqref{def:M}); and the term 1 stands for $M_m$ itself.
Summing \eqref{eq:rec} over $m,m-1,\dots,2$ gives the desired formula.
\end{proof}

Note that $\scr{H}_m^*$ proves that \req{eq:ineq} is tight. 
However, in contrast to the case of $k=2$, 
there are hypergraphs on $km$ vertices 
 containing a unique perfect matching and $f(k,m)$ edges
which are
not isomorphic to $\scr{H}_m^*$.
For example, if $m=2$,
consider an edge $E \in \scr{H}_2^*$, $E \neq M_1, M_2$.
Let $\bar{E}$ be the complement of $E$,
\textit{i.e.}, $\bar{E} = \{1,\ldots,2k\} \setminus E$.
Then, the hypergraph obtained from $\scr{H}_2^*$ by replacing $E$
with $\bar{E}$ provides a non-isomorphic example for the tightness of \req{eq:ineq}.

\section{Proof of Theorem \ref{mainthm}}
We start with some definitions. We use the terms ``edge'' and ``$k$-set'' interchangeably.

\begin{definition}\label{coveringdef}
 Given any collection of $2\leq \ell \leq k$ disjoint edges 
 $\scr{L} = \braces{M_1,\ldots,M_\ell}$, 
 we call a collection of edges $\scr{C} = \braces{C_1,\ldots,C_\ell}$ 
 a {\em covering of $\scr{L}$} if
\begin{itemize}
 \item $C_i\cap M_j \neq \emptyset$ for all $i,j\in\braces{1,\ldots,\ell}$, and 
 \item $\bigcup_iC_i = \bigcup_iM_i$.
\end{itemize}
\end{definition}
\noindent
Note that the second condition forces the edges in a covering to be disjoint. 

\begin{definition}\label{typedef}\
Let $\scr{L}$ be as in Definition \ref{coveringdef}, 
let $\scr{C}$ be a covering of $\scr{L}$ and let $C\in\scr{C}$. 
We say $C$ {\em is of type $\vect{a}$} if 
\begin{itemize}
 \item $\vect{a} = (a_1,\ldots,a_\ell) \in \mathbb{N}^\ell$, 
 $\sum_i a_i = k$ and $a_1\geq a_2\geq\ldots\geq a_\ell \geq 1$, and
 \item there exists a permutation $\sigma$ of $\{1,2,\dots,\ell\}$ 
 such that $\abs{C\cap M_{\sigma(i)}} = a_i$ for each $1\le i\le \ell$.
\end{itemize}

\noindent
Let  $\scr{A}_{k,\ell} = \{ \vect{a} = (a_1,\ldots,a_\ell) \in \mathbb{N}^\ell : 
a_1\geq a_2 \geq \ldots \geq a_\ell \geq 1
\text{ and } a_1 + \ldots + a_\ell = k \}$. 
\end{definition}

Given a vector $\vect{a} \in \scr{A}_{k,\ell}$, 
let $\scr{C}_{\vect{a}}$ be the collection of 
all coverings $\scr{C}$ of $\scr{L}$ such that every 
$C\in \scr{C}$ is of type $\vect{a}$. 
In other words, $\scr{C}_{\vect{a}}$ consists of coverings 
using only edges of type $\vect{a}$. 
We claim that $\scr{C}_{\vect{a}}$ is not empty
for every $\vect{a} \in \scr{A}_{k,\ell}$. 
Indeed, for $i=0,\ldots,\ell-1$ let $\sigma_i$ be a permutation of $\{1,2,\dots,\ell\}$ (clockwise rotation)
obtained by a cyclic shift by $i$, \textit{i.e.},
$\sigma_i(j) = j+i\ (\text{mod } \ell)$.  
We form $C_i$ by picking $a_{\sigma_i(j)}$
items from $M_j$ for each $1\le j\le \ell$.
As $\sum_i a_{\sigma_i(j)} = k$,
we may pick the $\ell$ edges $C_i$ to be disjoint,
thereby obtaining a covering.

\begin{proof}[Proof of Theorem \ref{mainthm}]  
Let $\sH=(V,\sE)$ be a $k$-graph of order $km$ with the unique perfect matching
$\scr{M} = \braces{M_1,\ldots,M_m}$. We show that $|\sE| \le f(k,m)$.

We partition the  edges 
into collections of edges which intersect exactly $\ell$ of the matching edges. 
That is, 
for $\ell = 1,\ldots, k$, we set
\[
 \scr{B}_\ell = \braces{E\in\sE  : \sum_{i=1}^m{\bf{1}}_{E\cap M_i \neq \emptyset}=\ell}.
\]
Clearly, $\abs{\scr{E}} = \sum_{\ell=1}^k \abs{\scr{B}_\ell}.$  Once again, $\abs{\scr{B}_1} = m$.
We will show, by contradiction, 
that $\abs{\scr{B}_\ell} \leq b_{k,\ell} \binom{m}{\ell}$
for all $2\le \ell \le k$.

Suppose that $\abs{\scr{B}_\ell} > b_{k,\ell} \binom{m}{\ell}$
for some $2\le \ell \le k$.
Then, by the pigeonhole principle,
there exists some set of $\ell$ matching edges, say, without loss of generality, 
$\scr{L} = \{M_1,\ldots,M_\ell\}$
such that 
\begin{equation}\label{eq:B}
\abs{\scr{B}_\ell \cap \sH[\scr{L}]} \geq b_{k,\ell}+1,
\end{equation}
where $\sH[\scr{L}]$ denotes the sub-hypergraph of $\sH$ spanned by the vertices in
$\bigcup_{i=1}^{\ell} M_i$.
Let $\scr{G}$ be the collection of all $k$-sets on $\bigcup_i M_i$
that intersect every $M_i \in \scr{L}$. That is
\[\scr{G} = \left\{A : \abs{A}=k, A \cap M_i \neq \emptyset \text{ for each } 1\le i\le \ell 
\text{ and } A \subseteq \bigcup_{i} M_i \right\}.\]
As in \req{blkG}, we have
\[ b_{k,\ell} = \frac{\ell-1}{\ell} \abs{\scr{G}} 
= \frac{\ell-1}{\ell}\sum_{\vect{a} \in \scr{A}_{k,\ell}} \abs{\scr{G}_{\vect{a}} },\]
where $\scr{G}_{\vect{a}}$ is the collection of $k$-sets of type $\vect{a}$.
Hence, by \eqref{eq:B} we get
$$
\abs{\scr{B}_\ell \cap \sH[\scr{L}]} \geq \frac{\ell-1}{\ell}\sum_{\vect{a} \in \scr{A}_{k,\ell}} \abs{\scr{G}_{\vect{a}} } +1,
$$
and consequently,
there exists some type $\vect{a}$ such that
\begin{equation}\label{eq:contr}
\abs{\scr{B}_\ell \cap \scr{G}_{\vect{a}}} 
\geq \frac{\ell-1}{\ell} \abs{\scr{G}_{\vect{a}}} + 1.
\end{equation}
Recall that $|\scr{C}|=\ell$ and that $\scr{C}_{\vect{a}}$ is the nonempty collection of 
all coverings $\scr{C}$ of $\scr{L}$ such that every 
$C\in \scr{C}$ is of type $\vect{a}$. 
By symmetry, every $k$-set $A\in \scr{G}_{\vect{a}}$ belongs to exactly
$$
\frac{|\scr{C}_{\vect{a}}| \ell}{|\scr{G}_{\vect{a}}|}
$$
coverings $\scr{C}\in \scr{C}_{\vect{a}}$. Since no $\scr{C} \in \scr{C}_{\vect{a}}$ is contained in $\scr{H}$ (otherwise we could replace $\scr{L}$
 by $\scr{C}$ to obtain a different perfect matching,  contradicting the uniqueness of $\scr{M}$), the number of $k$-sets in $\scr{G}_{\vect{a}}$ that are not in $\scr{B}_\ell$ is at least
 $$
\abs{\scr{C}_{\vect{a}}}  \left/  \frac{|\scr{C}_{\vect{a}}| \ell}{|\scr{G}_{\vect{a}}|}\right. = \frac{|\scr{G}_{\vect{a}}|}{\ell}.
 $$
That means,
$$
\abs{\scr{B}_\ell \cap \scr{G}_{\vect{a}}} 
\le \frac{\ell-1}{\ell} \abs{\scr{G}_{\vect{a}}}
$$
which contradicts \eqref{eq:contr}. Thus, $\abs{\scr{B}_\ell} \leq b_{k,\ell} \binom{m}{\ell}$, as required.
\end{proof}

\end{document}